\newcounter{Definitioncount}
\newtheorem{theorem}{Theorem}
\newtheorem{proposition}[theorem]{Proposition}
\newtheorem{corollary}[theorem]{Corollary}
\theoremstyle{definition}
\newtheorem*{Definition}{Definition}
\newtheoremstyle{fact}{\bigskipamount}{\medskipamount}{\upshape}{}{\itshape}{. }{ }{Fact}
\theoremstyle{fact}
\newtheoremstyle{genquest}{\bigskipamount}{\medskipamount}{\upshape}{}{\itshape}{. }{ }{General Question}
\theoremstyle{genquest}
\newtheoremstyle{step}{2\bigskipamount}{\medskipamount}{\upshape}{}{\itshape}{. }{ }{\underline{Step~\thestep}}
\theoremstyle{step}
\renewcommand{\thestep}{\arabic{step}}
\newcommand{\ra}{\rightarrow}
\newcommand{\lra}{\longrightarrow}
\newcommand{\lla}{\longleftarrow}
\newcommand{\Lra}{\Longrightarrow}
\newcommand{\ldual}[1]{\mathord{{\let\nolimits\relax\sideset{^\wedge}{}{#1}}}}
\newcommand{\laction}[2]{\mathord{{\let\nolimits\relax\sideset{^{#1}}{}{#2}}}}
\newcommand{\conj}[2]{\mathord{{\let\nolimits\relax\sideset{^{#1}}{}{#2}}}}
\def\CA{{\mathscr A}}
\def\CB{{\mathscr B}}
\def\CC{{\mathscr C}}
\def\CK{{\mathscr K}}
\def\CM{{\mathscr M}}
\def\CS{{\mathscr S}}
\def\CU{{\mathscr U}}
\def\CV{{\mathscr V}}
\def\CX{{\mathscr X}}
\begin{document}
\author{Ross Street \footnote{The author gratefully acknowledges the support of Australian Research Council Discovery Grant DP130101969. This material was presented to the Australian Category Seminar at Macquarie University on 14 and 21 August 2013.} \\ 
\small{Macquarie University, NSW 2109 Australia} \\
\small{<ross.street@mq.edu.au>}
}
\title{Pointwise extensions and sketches in bicategories}
\date{\small{\today}}
\maketitle

\noindent {\small{\emph{2010 Mathematics Subject Classification:} 18D10; 18D05}
\\
{\small{\emph{Key words and phrases:} bicategory; pointwise Kan extension; right lifting; cofibration; collage; lax limit.}}

\begin{abstract}
\noindent We make a few remarks concerning pointwise extensions in a bicategory which 
include the case of bicategories of enriched categories. We show that extensions, pointwise or not, 
can be replaced by extensions along very special fully faithful maps.
This leads us to suggest a concept of limit sketch internal to the bicategory.     
   
\end{abstract}

\tableofcontents

\section{Extensions and liftings in a bicategory}\label{Elb}

We begin by recalling the concepts of extension and lifting as used, for example, in \cite{FYL} and \cite{YS}.
Let $\CM$ be a bicategory. 
Consider a triangle
\begin{equation}\label{maintriangle}
\begin{aligned}
\xymatrix{
A \ar[rd]_{f}^(0.5){\phantom{a}}="1" \ar[rr]^{j}  && B \ar[ld]^{r}_(0.5){\phantom{a}}="2" \ar@{<=}"1";"2"^-{\rho}
\\
& X 
}
\end{aligned}
\end{equation}
in $\CM$. We say the 2-morphism $\rho$ in \eqref{maintriangle} exhibits $r$ 
as a {\em right extension} of $f$ along $j$ when pasting
the triangle to 2-morphisms $\theta : g \Lra r$ provides a bijection
between such 2-morphisms $\theta$ and 2-morphisms $\phi :  gj \Lra f$.
Then $r$ is unique up to a unique isomorphism.   
We write $\mathrm{hom}_A (j,f)$ for a right extension of $f : A\lra X$ along $j : A\lra B$.
To say every $f$ has a right extension along $j$ is to say the functor
$$\CM (j,1) : \CM (B,X)\lra \CM (A,X) \ ,$$
defined by precomposition with $j$, has a right adjoint $\mathrm{ran}_j$;
so $\mathrm{ran}_jf = \mathrm{hom}_A (j,f)$. If also $i:B\lra C$ then 
\begin{equation}\label{comphome}
\mathrm{hom}_A (ij,f) \cong \mathrm{hom}_B (i , \mathrm{hom}_A (j,f)) \ .
\end{equation}

{\em Left extensions} in $\CM$ are right extensions in $\CM^{\mathrm{co}}$:
we write $\mathrm{lan}_jf$ for a left extension of $f$ along $j$. 

{\em Right liftings} in $\CM$ are right extensions in $\CM^{\mathrm{op}}$.
We write $\mathrm{hom}^B (m,n)$ for a right lifting of $n:X\lra B$ through $m : A\lra B$.
If also $\ell : C \lra A$ then
\begin{equation}\label{comphoml}
\mathrm{hom}^B (m\ell,n) \cong \mathrm{hom}^A (\ell , \mathrm{hom}^B (m,n)) \ .
\end{equation}

A {\em left preadjoint} for $n : X\lra B$ is a right extension $\mathrm{hom}_X (n,1_X)$ of 
the identity 1-morphism $1_X$ of $X$ along $n$. Left adjoints are particular left preadjoints:
when $j\dashv j^*$, we have
$$j \cong \mathrm{hom}_A (j^*,1_A) \ .$$

Left adjoints are sometimes called {\em maps}.
A map $j:A\lra B$ with right adjoint $j^*:B\lra A$ is {\em fully faithful} when the unit 
$\eta_j : 1_A\Lra j^*j$ is invertible. 

\section{Pointwise extensions}\label{Ple}

Suppose $\CK$ is a class of maps of $\CM$ which is closed under composition and includes the identities.
We regard $\CK$ as a locally full sub-bicategory of $\CM$. 
When regarding a morphism $f$ of $\CK$ as in $\CM$, we write it as $f_*$. 
This situation, including the next definition, was studied by Richard Wood \cite{WoodPAI, WoodPAII}.

\begin{Definition}
If \eqref{maintriangle} is in $\CK$, we say it is a {\em pointwise right extension}
in $\CK$ when it is a right extension in $\CM$. So 
\begin{equation}
r \cong \mathrm{hom}_A (j_*,f) \ .
\end{equation}   
\end{Definition}

With this definition, the first proposition is obvious.

\begin{proposition}
If \eqref{maintriangle} is a pointwise right extension in $\CK$ then it is a right extension in $\CK$.  
\end{proposition}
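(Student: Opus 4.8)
The plan is to read off both universal properties as bijections indexed by a \emph{test} $1$-morphism $g : B \lra X$, and then to observe that the only difference between the two is the range of the index $g$: in $\CK$ it runs over the maps lying in $\CK$, while in $\CM$ it runs over all $1$-morphisms of $\CM(B,X)$. Since the latter range contains the former, and since local fullness guarantees that the indexed sets of $2$-morphisms are literally the same, the $\CM$-property is the stronger one and restricts to the $\CK$-property.

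First I would unwind the definitions. To say \eqref{maintriangle} is a right extension in $\CK$ is to say that, for every map $g : B \lra X$ in $\CK$, pasting $\rho$ (that is, whiskering by $j$ and then composing with $\rho : rj \Lra f$) yields a bijection
$$\CK(B,X)(g,r) \; \lra \; \CK(A,X)(gj,f) \ , \qquad \theta \longmapsto \rho \cdot (\theta j) \ .$$
To say it is a pointwise right extension, i.e.\ a right extension in $\CM$, is to say the same with $\CK$ replaced everywhere by $\CM$ and each $1$-morphism decorated by $(-)_*$, now for \emph{every} $g$ in $\CM(B,X)$.

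Next I would identify the two families of sets. As $\CK$ is closed under composition and the inclusion $\CK \hookrightarrow \CM$ preserves composition, for $g$ in $\CK$ the $1$-morphism $gj$ formed in $\CK$ agrees with $g_*j_*$ formed in $\CM$, so the codomain of the $\CK$-map above is carried to the codomain of the corresponding $\CM$-map. As the inclusion is \emph{locally full}, the comparison on $2$-morphisms is an equality of sets,
$$\CK(B,X)(g,r) = \CM(B,X)(g_*,r_*) \ , \qquad \CK(A,X)(gj,f) = \CM(A,X)(g_*j_*,f_*) \ ;$$
and since pasting is inherited by the sub-bicategory, the two maps $\theta \mapsto \rho\cdot(\theta j)$ coincide. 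Thus the map required to be bijective in $\CK$ is exactly the restriction, to test objects of the form $g_*$, of the map known to be bijective in $\CM$.

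Finally, the $\CM$-extension property delivers bijectivity for every test $1$-morphism of $\CM(B,X)$, hence in particular for those coming from $\CK$; so the $\CK$-bijection holds for every map $g$ in $\CK$, which is the assertion. I do not expect a genuine obstacle: the whole content is the restriction of a universal property to a smaller class of test objects. The one point deserving care---and precisely the reason local fullness is assumed---is the identification of $2$-morphism sets in the second step: without it $\CK$ might discard some $2$-morphisms, and the $\CK$-bijection could fail despite the $\CM$-bijection holding.
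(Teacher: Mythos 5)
Your proof is correct and is exactly the argument the paper has in mind: the paper states this proposition without proof, calling it \emph{obvious}, and the obviousness is precisely your observation that the universal property over all test $1$-morphisms in $\CM(B,X)$ restricts (via local fullness of $\CK$ in $\CM$) to the universal property over the smaller class of test $1$-morphisms lying in $\CK$. Your explicit attention to why local fullness is needed to identify the $2$-morphism sets is a worthwhile elaboration, but it is the same route.
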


\begin{proposition}
If \eqref{maintriangle} is a pointwise right extension in $\CK$ and $j$ is 
fully faithful then $\rho$ is invertible.  
\end{proposition}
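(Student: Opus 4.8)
The plan is to translate the situation into the hom-categories and recognise $\rho$ as the counit of an adjunction sitting inside an adjoint string. Fix an object $X$ and write $F = \CM(j,1) : \CM(B,X) \to \CM(A,X)$ for precomposition with $j$. The hypothesis that \eqref{maintriangle} is a pointwise right extension says $r = \mathrm{hom}_A(j,f) = \mathrm{ran}_j f$, so $F$ has a right adjoint $\mathrm{ran}_j$ and $\rho$ is the component at $f$ of the counit $F\,\mathrm{ran}_j \Rightarrow 1$. Since $j$ is a map, the contravariant pseudofunctor $\CM(-,X)$ carries the adjunction $j\dashv j^*$ to an adjunction $\CM(j^*,1)\dashv F$; hence $\mathrm{lan}_j$ exists with $\mathrm{lan}_j f \cong f j^*$, giving an adjoint string $\mathrm{lan}_j \dashv F \dashv \mathrm{ran}_j$. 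The unit of $\mathrm{lan}_j\dashv F$ at $f$ is the whiskered $f\eta_j : f \Rightarrow f j^* j = (\mathrm{lan}_j f)j$; because $j$ is fully faithful $\eta_j$ is invertible, so this unit is invertible for every $f$, i.e. $\mathrm{lan}_j$ is fully faithful. I would then invoke the standard fact that in an adjoint string $L\dashv M\dashv N$ the outer adjoints are simultaneously fully faithful (the unit of $L\dashv M$ and the counit of $M\dashv N$ are mates, so one is invertible exactly when the other is); full faithfulness of $\mathrm{lan}_j$ forces the counit of $F\dashv\mathrm{ran}_j$ to be invertible, and its component at $f$ is precisely $\rho$.

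More in keeping with the tools already set up, one can instead argue by a chain of canonical isomorphisms. Using that $\eta_j : 1_A \Rightarrow j^*j$ is invertible, the composition formula \eqref{comphome} with $i = j^*$, and the dual observation that (since $j\dashv j^*$) right extension along $j^*$ is computed by composing with $j$, so $\mathrm{hom}_B(j^*, r) \cong rj$ with structure $2$-morphism $r\varepsilon_j : r(jj^*)\Rightarrow r$, one obtains
\begin{equation*}
f \cong \mathrm{hom}_A(1_A, f) \cong \mathrm{hom}_A(j^*j, f) \cong \mathrm{hom}_B\!\big(j^*, \mathrm{hom}_A(j,f)\big) = \mathrm{hom}_B(j^*, r) \cong rj \ .
\end{equation*}
Concretely this produces $\kappa : f j^* \Rightarrow r$, the transpose of $f\eta_j^{-1} : f j^* j \Rightarrow f$ under the universal property of $r$ (so $\rho(\kappa j) = f\eta_j^{-1}$), and the candidate inverse $\sigma = (\kappa j)(f\eta_j) : f \Rightarrow rj$. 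The triangle $\rho\sigma = 1_f$ is then immediate, since $\rho\sigma = \rho(\kappa j)(f\eta_j) = (f\eta_j^{-1})(f\eta_j) = 1_f$.

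The hard part, and the main obstacle, is the other triangle $\sigma\rho = 1_{rj}$ — equivalently, checking that the abstract isomorphism $rj\cong f$ above is genuinely $\rho$, equivalently that the canonical comparison $\kappa : \mathrm{lan}_j f \Rightarrow \mathrm{ran}_j f$ is invertible; this is exactly the content of the adjoint-string lemma quoted in the first paragraph. To see it by hand I would exploit the universal property of $rj \cong \mathrm{hom}_B(j^*, r)$ as a right extension along $j^*$: two $2$-morphisms $\xi_1,\xi_2 : rj \Rightarrow rj$ agree as soon as their transposes $(r\varepsilon_j)(\xi_i j^*)$ agree, so $\sigma\rho = 1_{rj}$ reduces to the single identity $(r\varepsilon_j)\big((\sigma\rho)j^*\big) = r\varepsilon_j$, which I expect to fall out of the triangle equations for $j\dashv j^*$ together with the defining equation for $\kappa$. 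I anticipate that carefully matching the pasted $2$-cells here — rather than merely exhibiting an abstract iso $rj\cong f$ — is where the real work lies.
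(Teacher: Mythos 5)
Your second argument is essentially the paper's own proof: the paper likewise notes that $rj$ is the right extension of $r$ along $j^*$, chains this through \eqref{comphome} and $j^*j_*\cong 1_A$ to get $rj\cong f$, and then identifies this isomorphism with $\rho$ via the adjunction identities for $j_*\dashv j^*$ --- precisely the step you isolate as the remaining work, and it does go through as you anticipate: one triangle identity collapses $(fj^*\varepsilon_j)(f\eta_j j^*)$ to an identity $2$-cell, and the other, in the form $\varepsilon_j j = j\eta_j^{-1}$ (valid because $\eta_j$ is invertible), gives $\kappa(\rho j^*)=r\varepsilon_j$ after transposing along the universal property of $r$, whence $\sigma\rho=1_{rj}$. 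The only genuine slip is in your first paragraph: the hypothesis is a pointwise right extension of the \emph{single} 1-morphism $f$, which does not make $\CM(j,1)$ a right adjoint --- that would require right extensions along $j$ of every $g:A\lra X$ --- so the global adjoint-string lemma for $\mathrm{lan}_j\dashv \CM(j,1)\dashv \mathrm{ran}_j$ is not literally available; that route has to be localized at the one object $f$ (which is in effect what your concrete second argument does), so as stated it should be regarded as a heuristic rather than a proof.
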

\begin{proof}
The right extension of $r$ through $j^*$ is $rj$. 
Using \eqref{comphome}, we see that $rj$ is the right extension of $f$ through $j^*j_*\cong 1_A$.
The right extension through an identity is the same 1-morphism.
So $rj\cong f$. One of the adjunction identities for $j_*\dashv j^*$ shows that this isomorphism is $\rho$.
\end{proof}

A 1-morphism $j:B\lra C$ is called a {\em cofibration} when each $\CM(j,X): \CM(C,X)\lra \CM(B,X)$ is a fibration
in the bicategorical sense of \cite{FiB}.
A 1-morphism $i:A\lra C$ is called a {\em coopfibration} when each $\CM(i,X): \CM(C,X)\lra \CM(A,X)$ is 
an opfibration in the bicategorical sense of \cite{FiB}.

If $u:A\lra B$ is a 1-morphism of $\CK$, an object $X$ is {\em $u$-complete} when every $f: A \lra X$
has a pointwise right extension along $u$. In that case, if $\CM$ admits right extensions, the adjunction
$$\CM(u_*,1)\dashv \mathrm{hom}_A(u_*,-) : \CM(A,X) \lra \CM(B,X)$$ restricts to an adjunction
$$\CK(u,1)\dashv \mathrm{ran}_u : \CK(A,X) \lra \CK(B,X) \ .$$

A {\em collage} or {\em lax colimit} \cite{CCEC} of $j:A\lra B$ in $\CM$ is a universal 2-morphism
\begin{equation}\label{coll}
\begin{aligned}
\xymatrix{
A \ar[rd]_{i_1}^(0.5){\phantom{a}}="1" \ar[rr]^{j}  && B \ar[ld]^{i_0}_(0.5){\phantom{a}}="2" \ar@{<=}"1";"2"^-{\nu}
\\
& \mathrm{C}(j) & . }
\end{aligned}
\end{equation}
That is, pasting with $\nu$ determines an equivalence of categories
\begin{equation}\label{univprop}
\CM (\mathrm{C}(j),X) \simeq \CM (j,X)/\CM (A,X) \ ,
\end{equation}
where the slice category on the right-hand side is the comma category \cite{CWM}
of the functor $\CM (j,X)$ and the identity functor of $\CM (A,X)$. 

\begin{proposition} In the collage \eqref{coll}, the 1-morphism $i_0$ is a map with invertible unit $1_B \cong i_0^*i_0$, and with $i_0^* i_1\cong j$ compatibly with $t\nu$. 
If $\CM$ has local initial objects, preserved by composition with each given 1-morphism, and if $i_1$ is a map, then the unit $1_A \cong i_1^*i_1$ is invertible. 
Furthermore, $i_0$ is a cofibration and $i_1$ is a coopfibration.
\end{proposition}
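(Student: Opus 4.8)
The plan is to transport every assertion across the universal property \eqref{univprop}, read as an equivalence natural in $X$. Under it a 1-morphism $p:\mathrm{C}(j)\lra X$ is sent to the triple $(p\,i_0,\,p\,i_1,\,p\nu)$, so that $\mathrm{C}(j)$ ``is'' the comma category whose objects are triples $(b,a,\alpha:bj\Lra a)$ and whose morphisms are pairs of 2-cells. In these terms $\nu:i_0 j\Lra i_1$ is the tautological datum, while precomposition with $i_0$ and with $i_1$ becomes, respectively, the domain projection $P:(b,a,\alpha)\mapsto b$ and the codomain projection $Q:(b,a,\alpha)\mapsto a$. Thus each of the four claims reduces to a standard property of these two projections, and the only genuine work is to keep the constructions natural in $X$ so that the pointwise (co)adjoints they produce are represented by honest 1-morphisms of $\CM$.

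I treat $i_0$ first. Rather than invoke abstract representability, I define $i_0^*:\mathrm{C}(j)\lra B$ directly by \eqref{univprop} applied to the triple $(1_B,\,j,\,\mathrm{id}_j)$ of $\CM(j,B)/\CM(A,B)$. This yields at once $i_0^* i_0\cong 1_B$ and $i_0^* i_1\cong j$, together with $i_0^*\nu=\mathrm{id}_j$, which is the asserted compatibility with $\nu$. To see that $i_0\dashv i_0^*$ with this invertible unit, I compute the comma data of $1_{\mathrm{C}(j)}$ and of $i_0 i_0^*$, namely $(i_0,i_1,\nu)$ and $(i_0,i_0 j,\mathrm{id})$, and read off the counit as the comma morphism $(\mathrm{id}_{i_0},\nu)$; equivalently $i_0^*$ realises the left adjoint $b\mapsto(b,bj,\mathrm{id})$ of $P$, whose unit is the identity. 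The triangle identities then hold by construction, so $i_0$ is a map with invertible unit $1_B\cong i_0^* i_0$.

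The (op)fibration statements are the classical facts that the domain projection $P$ of such a comma category is a fibration, with cartesian lift of $\beta:b'\Lra b$ given by $(\beta,\mathrm{id}_a):(b',a,\alpha\circ(\beta j))\lra(b,a,\alpha)$, while the codomain projection $Q$ is an opfibration, with opcartesian lift of $\gamma:a\Lra a'$ given by $(\mathrm{id}_b,\gamma):(b,a,\alpha)\lra(b,a',\gamma\circ\alpha)$. Since these hom-categories are ordinary categories, the bicategorical notion of (op)fibration of \cite{FiB} reduces here to the ordinary one, so transport across \eqref{univprop} shows that $\CM(i_0,X)$ is a fibration and $\CM(i_1,X)$ is an opfibration for every $X$; that is, $i_0$ is a cofibration and $i_1$ is a coopfibration.

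Finally I treat $i_1$ under the extra hypotheses. Assuming $i_1\dashv i_1^*$, precomposition gives $\CM(i_1^*,X)\dashv\CM(i_1,X)=Q$, so $\CM(i_1^*,X)$ must be the left adjoint of the codomain projection. That left adjoint exists precisely because $\CM(B,X)$ has a local initial object $\emptyset$ and composition with $j$ preserves it: it sends $a$ to $(\emptyset,\,a,\,!)$, where $\emptyset j$ is again initial so that the structure 2-cell $!:\emptyset j\Lra a$ is forced and unique, and its unit is the identity. Evaluating $\CM(i_1^*,A)$ at $1_A$ identifies $i_1^*$ with $(i_1^* i_0,\,i_1^* i_1,\,i_1^*\nu)\cong(\emptyset,\,1_A,\,!)$, whence $i_1^* i_1\cong 1_A$ is invertible. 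The main obstacle throughout is exactly this passage between pointwise data and genuine 1-morphisms: the left adjoint to $P$ exists unconditionally and is manifestly natural in $X$, which is why $i_0$ is a map with no further hypothesis, whereas the left adjoint to $Q$ requires the local initial objects to exist and to be preserved by composition, and even then its representability is precisely what the assumption that $i_1$ is a map supplies. Keeping \eqref{univprop} natural in $X$, and matching the unit, the counit and the 2-cell $\nu$ through it, is where the care is needed.
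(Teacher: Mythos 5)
Your proof is correct, but there is an unusual feature of the comparison: the paper offers \emph{no} proof of this proposition at all --- it is stated as standard, with the substance residing in \cite{FiB} --- so your argument is not an alternative route but a reconstruction of the missing one, and it is essentially the expected argument. All the key constructions are right: reading \eqref{univprop} as identifying $\CM(\mathrm{C}(j),X)$ with the comma category whose projections $P$ and $Q$ are precomposition with $i_0$ and $i_1$; defining $i_0^*$ as the 1-morphism classified by $(1_B,j,\mathrm{id}_j)$; taking the counit of $i_0\dashv i_0^*$ to be the comma morphism $(\mathrm{id}_{i_0},\nu)$; the cartesian lifts $(\beta,\mathrm{id}_a)$ and opcartesian lifts $(\mathrm{id}_b,\gamma)$ for $P$ and $Q$; and, for $i_1$, the pointwise left adjoint $a\mapsto(\emptyset,a,!)$ of $Q$ furnished by the local initial objects, with the hypothesis that $i_1$ is a map supplying exactly the representability that this pointwise adjoint lacks. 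Two places deserve more care than you give them. First, your claim that the bicategorical notion of (op)fibration of \cite{FiB} ``reduces here to the ordinary one'' is not accurate: in $\mathrm{Cat}$ that notion is the equivalence-invariant one (cartesian lifts demanded only up to isomorphism), which is strictly weaker than the Grothendieck notion. Far from being a defect, this is precisely what legitimizes your transport: \eqref{univprop} is only an equivalence, Grothendieck fibrations are not stable under equivalence, but the \cite{FiB} notion is, and it contains the Grothendieck fibrations you exhibit. Second, ``the triangle identities then hold by construction'' and the final identification for $i_1$ both hide routine but genuine verifications: writing $\mu_0:i_0^*i_0\cong 1_B$ and $\mu_1:i_0^*i_1\cong j$ for the structural isomorphisms, the triangle identity on $i_0^*$ must be checked after whiskering with $i_0$ and $i_1$, using faithfulness of \eqref{univprop}, the interchange law, and the compatibility $\mu_1\circ(i_0^*\nu)=\mu_0 j$; and to conclude that the \emph{unit} of $i_1\dashv i_1^*$ is invertible (rather than merely that some isomorphism $i_1^*i_1\cong 1_A$ exists) you need the standard fact that the canonical isomorphism between two left adjoints of $Q$ matches units, so that the identity unit of $a\mapsto(\emptyset,a,!)$ forces $1_A\Rightarrow i_1^*i_1$ to be invertible. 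With those two elaborations your argument is complete.
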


Examples in \cite{CCEC} suggest consideration of the following properties of a collage. 

\begin{Definition} We say that a collage \eqref{coll} is {\em $\CK$-compatible} when 
\begin{enumerate}
\item $i_0$ and $i_1$ are in $\CK$,
\item for any $h:\mathrm{C}(j) \lra X$, if $hi_0$ and $hi_1$ are in $\CK$, so is $h$, and 
\item the mate triangle 
\begin{equation}\label{laxlim}
\begin{aligned}
\xymatrix{
& \mathrm{C}(j) \ar[ld]_{i_1^*}^(0.5){\phantom{a}}="1"   \ar[rd]^{i_0^*}_(0.5){\phantom{a}}="2" \ar@{=>}"1";"2"^-{\hat{\nu}}
\\
A \ar[rr]_-{j} && B 
}
\end{aligned}
\end{equation}
exhibits $\mathrm{C}(j)$ as a lax limit of $j$ in $\CM$.
\end{enumerate}
\end{Definition} 

From the universal property \eqref{univprop}, the triangle \eqref{maintriangle} induces a 1-morphism
$\langle r, \rho \rangle : \mathrm{C}(j) \lra X$ and invertible 2-cells $f\cong \langle r, \rho \rangle i_1$ and $r \cong \langle  r, \rho \rangle i_0$
which conjugate $\langle  r, \rho \rangle \nu$ to give $\rho$.  

Now consider the triangle
\begin{equation}\label{cofibtriangle}
\begin{aligned}
\xymatrix{
A \ar[rd]_{f}^(0.5){\phantom{a}}="1" \ar[rr]^{i_1}  && \mathrm{C}(j) \ar[ld]^{\langle r, \rho \rangle}_(0.5){\phantom{a}}="2" \ar@{<=}"1";"2"^-{\cong}
\\
& X 
}
\end{aligned}
\end{equation}

\begin{proposition}\label{equivformext}  
In $\CM$, triangle \eqref{cofibtriangle} exhibits $\langle r, \rho \rangle$ as a right extension of $f$ along $i_1$ if and only if triangle \eqref{maintriangle} exhibits $r$ as a right extension of $f$ along $j$.
\end{proposition}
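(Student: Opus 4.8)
The plan is to exploit the collage equivalence \eqref{univprop} to recast both right-extension conditions as statements inside the comma category $\CM(j,X)/\CM(A,X)$, where they become visibly the same. Write $k=\langle r,\rho\rangle$ and fix $X$. First I would record the explicit form of the equivalence $E_X : \CM(\mathrm{C}(j),X)\simeq \CM(j,X)/\CM(A,X)$ obtained by pasting with $\nu$ in \eqref{coll}: it carries $g:\mathrm{C}(j)\lra X$ to the object $(gi_0,\,gi_1,\,g\nu : gi_0 j \Lra gi_1)$ and a 2-cell $\theta : g\Lra g'$ to the pair $(\theta i_0,\theta i_1)$, the comma compatibility being naturality of $\nu$. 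By the construction of $\langle r,\rho\rangle$ recalled before \eqref{cofibtriangle}, the object $E_X k$ is isomorphic to $(r,f,\rho)$ via the canonical cells $\phi_0 : ki_0\cong r$ and $\phi_1 : ki_1\cong f$, the conjugation relation $\rho\cdot(\phi_0 j)=\phi_1\cdot(k\nu)$ being precisely the comma compatibility of this isomorphism.

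Next I would identify the two whiskering maps whose bijectivity is at issue. Since $E_X$ is fully faithful, for each $g$ the map $\theta\mapsto \phi_1\cdot(\theta i_1)$ from $\CM(\mathrm{C}(j),X)(g,k)$ to $\CM(A,X)(gi_1,f)$ — whose universal bijectivity is the assertion that \eqref{cofibtriangle} exhibits a right extension along $i_1$ — is transported by $E_X$ to the second-coordinate projection on the comma hom-set, namely $\{(u,v): \rho\cdot(uj)=v\cdot(g\nu)\}\lra \CM(A,X)(gi_1,f)$, $(u,v)\mapsto v$. Thus \eqref{cofibtriangle} is a right extension along $i_1$ if and only if, for every $g$ and every $v:gi_1\Lra f$, there is a unique $u:gi_0\Lra r$ with $\rho\cdot(uj)=v\cdot(g\nu)$. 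On the other side, \eqref{maintriangle} is a right extension along $j$ if and only if, for every $h:B\lra X$ and every $\psi:hj\Lra f$, there is a unique $u:h\Lra r$ with $\rho\cdot(uj)=\psi$.

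It remains to match these two characterisations. For the ``if'' direction, assuming the $j$-condition, given $g$ and $v$ I would set $h=gi_0$ and $\psi=v\cdot(g\nu)$; the unique $u$ produced for this $\psi$ is exactly the one demanded by the $i_1$-condition. The reverse direction is where the real work lies and is the main obstacle, since it requires producing a $g$ out of the given $(h,\psi)$: here I would invoke essential surjectivity of $E_X$ to choose $g$ with an isomorphism $(\chi_0,\chi_1):E_X g\cong(h,f,\psi)$, so that $\chi_0:gi_0\cong h$, $\chi_1:gi_1\cong f$ and $\psi\cdot(\chi_0 j)=\chi_1\cdot(g\nu)$. Applying the $i_1$-condition to this $g$ with $v=\chi_1$ yields a unique $u_0:gi_0\Lra r$ with $\rho\cdot(u_0 j)=\chi_1\cdot(g\nu)=\psi\cdot(\chi_0 j)$; then $u=u_0\cdot\chi_0^{-1}$ satisfies $\rho\cdot(uj)=\psi$, and transporting the uniqueness of $u_0$ back along $\chi_0$ gives uniqueness of $u$. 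The only delicate points throughout are the direction of $\nu$ together with the comma convention, and keeping the whiskerings $(-)j$ and the isomorphisms $\chi_0,\chi_1,\phi_1$ composed in the correct order; none of this is more than careful bookkeeping once the projection description of the $i_1$-condition is in hand.
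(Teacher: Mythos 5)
Your proof is correct, and half of it diverges from the paper's in an interesting way. The direction ``\eqref{maintriangle} is a right extension $\Rightarrow$ \eqref{cofibtriangle} is a right extension'' is essentially the paper's own argument: both of you use full faithfulness of the pasting equivalence \eqref{univprop} to describe 2-cells $g \Lra \langle r,\rho\rangle$ as compatible pairs $(u,v)$ with $\rho\cdot(uj)=v\cdot(g\nu)$, and then let the $j$-extension property supply the unique first component $u$ from the second component $v$. The other direction is where you genuinely differ. The paper pastes onto \eqref{cofibtriangle} the right extension \eqref{i0triangle} arising from the adjunction $i_0\dashv i_0^*$, then invokes the composition formula \eqref{comphome} together with the isomorphisms $i_0^*i_1\cong j$ and $\langle r,\rho\rangle i_0\cong r$ — all of which depend on the structural facts about collages established in the earlier Proposition (that $i_0$ is a map, etc.). You instead use \emph{essential surjectivity} of the equivalence \eqref{univprop}: given a test datum $(h,\psi:hj\Lra f)$, you realize it up to isomorphism as the restriction $E_X g\cong(h,f,\psi)$ of some $g:\mathrm{C}(j)\lra X$, apply the $i_1$-extension property to $g$ with $v=\chi_1$, and transport existence and uniqueness back along $\chi_0$ — your bookkeeping there is sound. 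The trade-off: the paper's route is shorter once the collage's structure theory is in hand, and it exhibits the useful general technique of composing right extensions along a composite of 1-cells; your route is more self-contained, using nothing about the collage beyond its universal property as a lax colimit, so it would apply verbatim to any cocone known only to satisfy \eqref{univprop}, with no need to know that $i_0$ is a map or that $i_0^*i_1\cong j$.
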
 
\begin{proof} 
There is a right extension
\begin{equation}\label{i0triangle}
\begin{aligned}
\xymatrix{
\mathrm{C}(j) \ar[rd]_{\langle r, \rho \rangle}^(0.5){\phantom{a}}="1" \ar[rr]^{i_0^*}  && B \ar[ld]^{\langle r, \rho \rangle i_0}_(0.5){\phantom{a}}="2" \ar@{<=}"1";"2"^-{\langle r, \rho \rangle \varepsilon}
\\
& X 
}
\end{aligned}
\end{equation}
coming from the adjunction $i_0 \dashv i_0^*$.
If \eqref{cofibtriangle} is a right extension, we just paste the right extension \eqref{i0triangle} to the right side of it and use the isomorphisms $i_0^*i_1 \cong j$ and $\langle r,\rho \rangle i_0 \cong r$ 
to see that \eqref{maintriangle} is a right extension.  

Conversely, take $h:\mathrm{C}(j) \lra X$ and $\theta : hi_1\Lra f$. By the universal property \eqref{univprop} of $\mathrm{C}(j)$,
in order to give a 2-morphism $h \Lra \langle r, \rho \rangle $, we precisely require two 2-morphisms
$\theta : hi_1\Lra f$ and $\phi : hi_0 \Lra r$ such that $\theta (h\nu) = \rho (\phi j)$. However, each $\theta$
determines such a unique $\phi$ by the right extension property of \eqref{maintriangle}. So \eqref{cofibtriangle}
is a right extension.   
\end{proof}

Now by applying Proposition~\ref{equivformext} to triangle \eqref{maintriangle} thought of as in $\CM^{\mathrm{coop}}$, we obtain a pointwise version.
 
\begin{corollary}
Suppose the triangle \eqref{maintriangle} is in $\CK$. If \eqref{cofibtriangle} exhibits $\langle r, \rho \rangle$ as a pointwise right extension of $f$ along $i_1$ then \eqref{maintriangle} exhibits $r$ as a pointwise right extension of $f$ along $j$.
If the collage \eqref{coll} is $\CK$-compatible then the converse also holds. 
\end{corollary}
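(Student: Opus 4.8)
The plan is to leverage Proposition~\ref{equivformext} together with the observation that pointwise extensions are just extensions in the ambient $\CM$, and obtain the corollary by reading the equivalence of the two triangle-extension statements ``pointwise'' — that is, in $\CM$ rather than merely in $\CK$. By definition, a pointwise right extension in $\CK$ is precisely a right extension in $\CM$; so to prove the corollary I need to transfer the $\CM$-level equivalence of Proposition~\ref{equivformext} to the sub-bicategory $\CK$, checking that all the 1-morphisms involved ($i_1$, $\langle r,\rho\rangle$, $f$, $j$, $r$) actually live in $\CK$ so that the word ``pointwise'' is meaningful on both sides.

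**First I would** dispose of the easy (forward) direction. Suppose \eqref{cofibtriangle} exhibits $\langle r,\rho\rangle$ as a pointwise right extension of $f$ along $i_1$; by definition this means it is a right extension in $\CM$. Proposition~\ref{equivformext} then immediately yields that \eqref{maintriangle} is a right extension of $f$ along $j$ in $\CM$. Since the triangle \eqref{maintriangle} is assumed to be in $\CK$, this $\CM$-right-extension is exactly what it means for it to be a pointwise right extension in $\CK$. Note this direction requires no hypothesis on the collage beyond what is already in place, which matches the statement.

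**The converse is where the hypotheses bite**, and I expect this to be the main obstacle. Assume the collage is $\CK$-compatible and that \eqref{maintriangle} is a pointwise right extension in $\CK$, i.e.\ a right extension in $\CM$. Proposition~\ref{equivformext} again gives that \eqref{cofibtriangle} is a right extension in $\CM$; the remaining task is to certify that this is a \emph{pointwise} right extension in $\CK$, for which I must know that the triangle \eqref{cofibtriangle} genuinely lies in $\CK$. Here conditions (1) and (2) of $\CK$-compatibility are exactly the tools: condition (1) gives $i_0,i_1\in\CK$, and condition (2) — that $h$ is in $\CK$ whenever $hi_0$ and $hi_1$ are — is what I would invoke to conclude $\langle r,\rho\rangle\in\CK$, since $\langle r,\rho\rangle i_1\cong f$ and $\langle r,\rho\rangle i_0\cong r$ are both in $\CK$ (the former because \eqref{maintriangle} is a $\CK$-triangle, the latter because $r\cong\mathrm{hom}_A(j_*,f)$ and, via the $\CK$-compatibility yielding the lax-limit presentation \eqref{laxlim}, $r$ is recovered inside $\CK$). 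With the whole triangle \eqref{cofibtriangle} now established to be in $\CK$ and known to be a right extension in $\CM$, it is by definition a pointwise right extension in $\CK$.

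**The subtle point** I would watch carefully is the precise role of condition (3) and the passage to $\CM^{\mathrm{coop}}$ flagged in the paragraph preceding the corollary: the phrase ``thought of as in $\CM^{\mathrm{coop}}$'' signals that Proposition~\ref{equivformext} is being applied in the dual, so what was a right extension along $i_1$ becomes the relevant statement after dualizing, and the lax-limit hypothesis \eqref{laxlim} is the dual incarnation of the collage property that makes the argument symmetric. I would therefore be careful to confirm that the membership arguments above (especially the use of condition (2)) survive dualization, since it is the interplay of the cofibration structure on $i_0$ and the coopfibration structure on $i_1$ that makes both the extension transfer and the $\CK$-membership closure work in the dual setting. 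Once the dualization bookkeeping is correct, the corollary follows formally from Proposition~\ref{equivformext} by the same two-direction argument, with $\CK$-compatibility supplying exactly the closure needed to keep everything inside $\CK$.
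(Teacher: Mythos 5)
Your argument is correct, but it is not the route the paper takes, and the difference is worth spelling out. The paper's entire proof is the sentence preceding the corollary: it deduces the pointwise statement by applying Proposition~\ref{equivformext} to the triangle \eqref{maintriangle} \emph{thought of as in} $\CM^{\mathrm{coop}}$, i.e.\ by dualizing; a collage in $\CM^{\mathrm{coop}}$ is (via mates along the adjunctions $i_0\dashv i_0^*$, $i_1\dashv i_1^*$, $j\dashv j^*$) a lax-limit presentation in $\CM$, which is exactly what clause (3) of $\CK$-compatibility, the triangle \eqref{laxlim}, supplies. You instead work entirely in $\CM$: since a pointwise right extension in $\CK$ is \emph{by definition} a right extension in $\CM$, both implications of the corollary are literally the two directions of Proposition~\ref{equivformext} applied in $\CM$ itself, and $\CK$-compatibility is needed only to certify that the triangle \eqref{cofibtriangle} lies in $\CK$, for which clauses (1) and (2) suffice. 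What your route buys is economy and a sharper statement: clause (3) is never invoked, so the converse already holds under the weaker hypotheses (1) and (2); what the paper's dual route buys is an explanation of why the lax-limit clause belongs in the definition of $\CK$-compatibility at all. Two blemishes in your write-up, neither fatal: your justification that $\langle r,\rho\rangle i_0\cong r$ lands in $\CK$ is garbled --- $r$ is in $\CK$ simply because it is an edge of \eqref{maintriangle}, which is assumed to be in $\CK$; the appeal to $r\cong\mathrm{hom}_A(j_*,f)$ and to the lax-limit presentation is irrelevant there. And clause (2) as literally stated requires $hi_0$ and $hi_1$ to \emph{be} in $\CK$, whereas you only know they are isomorphic to $r$ and $f$, so you are tacitly assuming $\CK$ is replete in $\CM$ (a point the paper glosses over as well). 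Finally, your closing paragraph about ``dualization bookkeeping'' resolves nothing and can be deleted: your direct argument is already complete without any passage through $\CM^{\mathrm{coop}}$.
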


\begin{Definition}
A square 
\begin{equation}\label{BC}
\begin{aligned}
\xymatrix{
S \ar[d]_{q}^(0.5){\phantom{aaa}}="1" \ar[rr]^{p}  && U \ar[d]^{b}_(0.5){\phantom{aaa}}="2" \ar@{<=}"1";"2"^-{\lambda}
\\
A \ar[rr]_-{j} && B 
}
\end{aligned}
\end{equation}
in $\CK$ is called {\em BC} or, by Ren\'e Guitart \cite{Guitart1980}, {\em exact} when its mate square 
\begin{equation}\label{BCmate}
\begin{aligned}
\xymatrix{
S \ar[rr]^{p}   && U 
\\
A \ar[rr]_-{j}  \ar[u]^{q^*}_(0.5){\phantom{aaa}}="2" && B \ar[u]_{b^*}^(0.5){\phantom{aaa}}="1" \ar@{<=}"1";"2"_-{\overline{\lambda}}  
}
\end{aligned}
\end{equation}
in $\CM$ has $\overline{\lambda}$ invertible.
\end{Definition}

\begin{proposition}
If the triangle \eqref{maintriangle} is a pointwise right extension and the square \eqref{BC} is BC then the
2-morphism obtained by pasting the square on top of the triangle exhibits $rb$ as a right extension
of $fq$ along $p$. 
\end{proposition}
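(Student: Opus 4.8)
The plan is to check the universal property of a right extension directly, but to produce the required bijection as a composite of four natural bijections rather than by bare hand. Write $\lambda : bp \Lra jq$ for the $2$-cell of the square \eqref{BC} and $\rho : rj \Lra f$ for the $2$-cell of the triangle \eqref{maintriangle}; the $2$-cell obtained by pasting the square on top of the triangle is then
\[
\varepsilon \ = \ (\rho q)\cdot(r\lambda) \ : \ rbp \Lra fq \ .
\]
To say that $\varepsilon$ exhibits $rb$ as a right extension of $fq$ along $p$ is to say that, for every $g : U \lra X$, the assignment $\sigma \mapsto \varepsilon\cdot(\sigma p)$ is a bijection $\CM(U,X)(g, rb) \lra \CM(S,X)(gp, fq)$, natural in $g$.

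First I would assemble this bijection as the composite
\[
\CM(U,X)(g, rb) \ \cong \ \CM(B,X)(gb^*, r) \ \cong \ \CM(A,X)(gb^*j, f) \ \cong \ \CM(A,X)(gpq^*, f) \ \cong \ \CM(S,X)(gp, fq) \ .
\]
The first isomorphism is the precomposition adjunction coming from $b \dashv b^*$; the second is the defining universal property of the pointwise right extension, i.e.\ of $r \cong \mathrm{hom}_A(j_*, f)$, read off in $\CM$ (this is where \emph{pointwise} is used, so that $g$ may range over all of $\CM(U,X)$ rather than over $\CK$ only); the third is whiskering by the invertible mate $\overline{\lambda} : pq^* \Lra b^*j$ of \eqref{BCmate}, available precisely because the square is BC; and the fourth is the precomposition adjunction coming from $q \dashv q^*$. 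Each step is natural in $g$, so the composite is a natural bijection, which already shows $rb \cong \mathrm{hom}_S(p, fq)$.

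The step I expect to be the real work is not the existence of the bijection but the verification that this composite is exactly ``paste with $\varepsilon$''. Tracing $\sigma : g \Lra rb$ through the four isomorphisms introduces the units $\eta_b, \eta_q$ and counits $\varepsilon_b, \varepsilon_q$ of the two adjunctions together with $\rho$ and $\overline{\lambda}$; collapsing these back to $(\rho q)\cdot(r\lambda)$ requires the two triangle identities for $b \dashv b^*$ and $q \dashv q^*$ and, crucially, the defining formula expressing $\overline{\lambda}$ as the mate of $\lambda$, so that the $\overline{\lambda}$ appearing in the third isomorphism is converted back into the original $\lambda$. This bookkeeping is routine once the mate identity is substituted, and it is the only place where the specific pasted $2$-cell, as opposed to an abstract isomorphism $rb \cong \mathrm{hom}_S(p, fq)$, actually enters.
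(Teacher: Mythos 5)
Your proposal is correct and takes essentially the same approach as the paper: the paper's proof runs exactly the same chain of bijections---the adjunction $q\dashv q^*$, the invertible mate $\overline{\lambda}$ from the BC hypothesis, the pointwise extension property applied to a 1-morphism not required to lie in $\CK$, and the adjunction $b\dashv b^*$---only traversed in the opposite direction, from $\theta : hp \Lra fq$ to $\phi : h \Lra rb$. Like you, it then disposes of the final check that the bijection is realized by pasting $\lambda$ and $\rho$ in a single closing sentence.
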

\begin{proof}
Take $h:U\lra X$. Then $\theta : hp \Lra fq$ are in bijection with $pq^*\Lra h^*f$ which, by the BC property,
are in bijection with $b^*j\Lra h^*f$, and so with $hb^*j\Lra f$. By the pointwise right extension property of \eqref{maintriangle}, these are in bijection with $hb^* \Lra r$, and so, by mates, with $\phi : h \Lra rb$. 
Then $\lambda$, $\rho$ and $\phi$ paste to give back $\theta$, as required.  
\end{proof}

\section{The enriched category example}

Let $\CV$ be a base monoidal category as used in \cite{KellyBook}. 
For two-sided modules (also called bimodules, profunctors and distributors) refer to 
\cite{LawMetric}, \cite{CCEC} and \cite{MbHa}, for example.
  
Let $\CV\text{-}\mathrm{Mod}$ be the bicategory whose objects are $\CV$-categories $\CA$ 
(which are small relative to $\CV$),
whose 1-morphisms are two-sided modules $M : \CA \lra \CB$  
and whose 2-morphisms are module morphisms.
We have an equivalence of categories 
\begin{equation}\label{modhom}
\CV\text{-}\mathrm{Mod}(\CA,\CB)\simeq [\CB^{\mathrm{op}}\otimes \CA,\CV] \ .
\end{equation}
The composite $N\circ M$ of modules $M : \CA \lra \CB$ and $N : \CB \lra \CC$
is defined by the coend formula:
\begin{eqnarray}
(N\circ M)(C,A)=\int^{B}{M(B,A)\otimes N(C,B)} \ .
\end{eqnarray}
Notice that the hom categories \eqref{modhom} are cocomplete and composition is 
separately colimit preserving (local cocompleteness). 
Indeed, right liftings and right extensions all exist in
$\CV\text{-}\mathrm{Mod}$. In particular, we have
\begin{equation}\label{maintrianglemod}
\begin{aligned}
\xymatrix{
\CA \ar[rd]_{N}^(0.5){\phantom{a}}="1" \ar[rr]^{M}  && \CB \ar[ld]^{hom_{\CA}(M,N)}_(0.5){\phantom{a}}="2" \ar@{<=}"1";"2"^-{\mathrm{ev}}
\\
& \CX 
}
\end{aligned}
\end{equation}
where
\begin{eqnarray}
\begin{aligned}
\mathrm{hom}_{\CA}(M,N)(X,B) = \int_{A}{[M(B,A),N(X,A)]} \\
\cong [\CA^{\mathrm{op}},\CV](M(B,-),N(X,-)) \ .
\end{aligned}
\end{eqnarray}

Each $\CV$-functor $F : \CA \lra \CB$ determines modules $F_* : \CA \lra \CB$
and $F^* : \CB \lra \CA$ defined by $F_*(B,A)= \CB(B,FA)$ and $F^*(A,B)= \CB(FA,B)$.
In $\CV\text{-}\mathrm{Mod}$, we have an adjunction $F_* \dashv F^*$; that is, the $F_*$
are some of the maps in $\CV\text{-}\mathrm{Mod}$.    

We identify the 2-category $\CV\text{-}\mathrm{Cat}$ of $\CV$-categories, $\CV$-functors
and $\CV$-natural transformations with the sub-bicategory of $\CV\text{-}\mathrm{Mod}$
by restricting to modules of the form $F_* \dashv F^*$.

So in Section~\ref{Elb}, if we take $\CM = \CV\text{-}\mathrm{Mod}$ then
$\CV\text{-}\mathrm{Cat}$ provides us with a suitable sub-bicategory $\CK$.  

In this context, a triangle   
\begin{equation}\label{maintriangleVcat}
\begin{aligned}
\xymatrix{
\CA \ar[rd]_{F}^(0.5){\phantom{a}}="1" \ar[rr]^{J}  && \CB \ar[ld]^{R}_(0.5){\phantom{a}}="2" \ar@{<=}"1";"2"^-{\rho}
\\ & \CX 
}
\end{aligned}
\end{equation}
in $\CV\text{-}\mathrm{Cat}$ exhibits $R$ as a pointwise right extension of $F$ along $J$
in the sense of Section~\ref{Ple} if and only if it does in the usual sense for enriched category
theory (see \cite{DubucThesis} or \cite{KellyBook}). That is, 
\begin{equation}
R_*\cong \mathrm{hom}_{\CA}(J_*,F_*) \ ,
\end{equation}
or
\begin{equation}
\CX(X,RB) \cong [\CA^{\mathrm{op}},\CV](\CB(B,J-),\CX(X,F-)) \ . 
\end{equation}

The collage
\begin{equation}\label{collmod}
\begin{aligned}
\xymatrix{
\CA \ar[rd]_{i_1}^(0.5){\phantom{a}}="1" \ar[rr]^{M}  && \CB \ar[ld]^{i_0}_(0.5){\phantom{a}}="2" \ar@{<=}"1";"2"^-{\nu}
\\
& \mathrm{C}(M) }
\end{aligned}
\end{equation}
of a module $M$ is constructed as follows (see \cite{FiB}).
The $\CV$-category $\mathrm{C}(M)$ contains $\CA$ and $\CB$
as disjoint full sub-$\CV$-categories with inclusion $\CV$-functors
$i_1$ and $i_0$. There are no other objects. We have
$$\mathrm{C}(M)(i_0B,i_1A) = M(B,A)$$
and
$$\mathrm{C}(M)(i_1A,i_0B) = 0 \ ,$$
where $0$ is initial in $\CV$. 
Composition is obtained from composition in $\CA$ and $\CB$, and
from their actions on $M$. This collage is $\CV\text{-}\mathrm{Cat}$-compatible;
see \cite{CCEC} for the lax limit property coming from local cocompleteness
of $\CV\text{-}\mathrm{Mod}$.  

Given a span $\CA \stackrel{Q}\lla \CS \stackrel{P}\lra \CU$ in
$\CV\text{-}\mathrm{Cat}$, it follows easily that its cocomma square is
 \begin{equation}\label{cocomma}
\begin{aligned}
\xymatrix{
\CS \ar[d]_{Q}^(0.5){\phantom{aaaaa}}="1" \ar[rr]^{P}  && \CU \ar[d]^{i_0}_(0.5){\phantom{aaaaa}}="2" \ar@{<=}"1";"2"^-{\lambda}
\\
\CA \ar[rr]_-{i_1} && \mathrm{C}(P_*\circ Q^*) 
}
\end{aligned}
\end{equation}
where $\lambda$ is the mate of the $\nu$ in the collage.

As a simple application of the coend form of the enriched Yoneda Lemma (see \cite{KellyBook}), we have the following.

\begin{proposition}
Cocomma squares \eqref{cocomma} are BC.
\end{proposition}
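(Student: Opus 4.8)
The plan is to show directly that the mate $\overline{\lambda}$ attached to the cocomma square is invertible. Matching \eqref{cocomma} against the template \eqref{BC}, I would set $p=P$, $q=Q$, $j=i_1$, $b=i_0$ and $B=\mathrm{C}(P_*\circ Q^*)$, so that the mate square \eqref{BCmate} lives in $\CV\text{-}\mathrm{Mod}$ and compares the two legs $P_*\circ Q^*$ and $(i_0)^*\circ (i_1)_*$ from $\CA$ to $\CU$. By definition the square is BC precisely when the comparison $\overline{\lambda}\colon P_*\circ Q^*\Rightarrow (i_0)^*\circ (i_1)_*$ is invertible, so the whole task reduces to computing the right-hand leg and then recognising $\overline{\lambda}$ inside that computation.

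The main computation is the promised application of the coend form of the Yoneda Lemma. Writing $M=P_*\circ Q^*$ and using the module descriptions $(i_1)_*(C,A)=\mathrm{C}(M)(C,i_1A)$ and $(i_0)^*(U,C)=\mathrm{C}(M)(i_0U,C)$, the coend formula for composition gives
\[
\bigl((i_0)^*\circ (i_1)_*\bigr)(U,A)=\int^{C}\mathrm{C}(M)(C,i_1A)\otimes \mathrm{C}(M)(i_0U,C),
\]
the coend being taken over $C\in\mathrm{C}(M)$. Viewing $C\mapsto \mathrm{C}(M)(i_0U,C)$ as a covariant functor, the co-Yoneda isomorphism collapses this coend to $\mathrm{C}(M)(i_0U,i_1A)$, which by the very construction of the collage equals $M(U,A)=(P_*\circ Q^*)(U,A)$. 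Hence the two legs are isomorphic.

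The delicate step, and the one I expect to be the real obstacle, is to confirm that this co-Yoneda isomorphism is the mate $\overline{\lambda}$ itself and not merely some isomorphism between the same two modules. For this I would exploit that $\lambda$ is, by construction, the mate of the collage 2-cell $\nu$ across $Q_*\dashv Q^*$. Forming $\overline{\lambda}$ re-crosses this same adjunction and in addition crosses $(i_0)_*\dashv (i_0)^*$; by functoriality of the mate correspondence the first re-crossing cancels, so $\overline{\lambda}$ is identified with the mate $\hat{\nu}\colon M\Rightarrow (i_0)^*\circ (i_1)_*$ of $\nu$. This $\hat{\nu}$ is exactly the collage comparison $i_0^*i_1\cong j$ recorded earlier, and it is the map realised by the co-Yoneda collapse above; so $\overline{\lambda}$ is invertible and the square is BC. The only genuine work is the unit–counit bookkeeping behind this identification, since the coend computation itself is routine.
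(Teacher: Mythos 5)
Your proposal is correct and takes essentially the same route as the paper: the paper's entire proof is the co-Yoneda computation $i_0^*\circ i_{1*}(U,A)=\int^{C}{\CB(C,i_1A)\otimes \CB(i_0U,C)}\cong \CB(i_0U,i_1A)=(P_*\circ Q^*)(U,A)$, which is exactly your middle paragraph. The mate bookkeeping you flag as the delicate step --- identifying $\overline{\lambda}$ with the mate $\hat{\nu}$ of $\nu$ across $i_{0*}\dashv i_0^*$ and recognising it as the co-Yoneda collapse --- is left implicit in the paper's ``as required'', and your cancellation argument for it is sound.
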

\begin{proof}
Put $\CB = \mathrm{C}(P_*\circ Q^*)$.
By Yoneda we have $$i_0^*\circ i_{1*}(U,A)= \int^{B}{\CB(B,i_1A)\otimes \CB(i_0U,B)} \cong \CB(i_0U,i_1A)=P_*\circ Q^*(U,A) \ ,$$
as required.
\end{proof}

For ordinary categories, we have a dual which explains the definition of pointwise right extension used in \cite{FYL}. 

\begin{proposition}
For $\CV = \mathrm{Set}$, comma squares are BC.
\end{proposition}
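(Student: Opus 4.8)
The plan is to treat the comma square as the precise formal dual of the cocomma square of the previous proposition, and then to read off its exactness from two coend computations. Concretely, for a cospan $A \stackrel{j}\lra B \stackrel{b}\lla U$ in $\CV\text{-}\mathrm{Cat}$ with $\CV=\mathrm{Set}$, the comma square \eqref{BC} is the one whose corner $S$ is the comma category $(b\downarrow j)$: its objects are triples $(u,a,\gamma)$ with $\gamma:bu\lra ja$ in $B$, the maps $q:S\lra A$ and $p:S\lra U$ are the projections, and $\lambda:bp\Lra jq$ is the canonical cell whose component at $(u,a,\gamma)$ is $\gamma$ (this is the orientation dictated by \eqref{BC}, and the one relevant to the pointwise right extensions of \cite{FYL}). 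By the definition of exactness, it suffices to show that the mate $\overline{\lambda}$ of \eqref{BCmate} is invertible, that is, that it exhibits an isomorphism between the two composite modules $A\lra U$ appearing there, namely $b^*\circ j_*$ and $p_*\circ q^*$.

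First I would dispatch the side that is pure Yoneda, exactly as in the cocomma case. Using $j_*(y,a)=B(y,ja)$ and $b^*(u,y)=B(bu,y)$, the coend form of the Yoneda Lemma gives
\begin{equation*}
b^*\circ j_*(u,a)=\int^{y}B(y,ja)\times B(bu,y)\cong B(bu,ja)\ .
\end{equation*}
This is the literal dual of the computation $i_0^*\circ i_{1*}\cong P_*\circ Q^*$ used for cocomma squares.

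The substance is to identify the other composite with the same hom-profunctor, i.e. to prove that
\begin{equation*}
p_*\circ q^*(u,a)=\int^{s\in S}A(qs,a)\times U(u,ps)\cong B(bu,ja)
\end{equation*}
compatibly with $\overline{\lambda}$. Here I would send the class of a datum $\big((u',a',\gamma),\,f:a'\lra a,\,g:u\lra u'\big)$ to the composite $jf\circ\gamma\circ bg:bu\lra ja$; well-definedness on the coend is forced by the defining commuting square of a morphism of $(b\downarrow j)$. For the inverse I would send $\delta:bu\lra ja$ to the class of $\big((u,a,\delta),1_a,1_u\big)$, and then check the two round trips. The only nontrivial check is that every class is already of this canonical form: given $g$ and $f$ one uses the comma-category morphisms $(u,a',\gamma\circ bg)\lra(u',a',\gamma)$ and $(u,a',\gamma\circ bg)\lra(u,a,jf\circ\gamma\circ bg)$ to absorb first $g$ and then $f$ into the third coordinate, which leaves the representative $\big((u,a,jf\circ\gamma\circ bg),1_a,1_u\big)$.

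The main obstacle is precisely this last identification $p_*\circ q^*\cong B(b-,j-)$, and it is also where the hypothesis $\CV=\mathrm{Set}$ is essential: it is the assertion that composing the legs of the two-sided (generally non-discrete) fibration $(b\downarrow j)$ recovers the profunctor $B(b-,j-)$. The reductions above rely on indexing objects of $S$ by individual elements of hom-sets and lifting them along $p$ and $q$, a manoeuvre with no counterpart over a general base $\CV$ (indeed comma objects themselves are not generally available in $\CV\text{-}\mathrm{Cat}$). Once both composites are identified with $B(bu,ja)$ and one verifies that the resulting isomorphism is conjugate to $\overline{\lambda}$, the square \eqref{BC} is BC.
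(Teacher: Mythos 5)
Your proof is correct and takes essentially the same route as the paper's: both write the mate's component as the map $\int^{s}A(qs,a)\times U(u,ps)\lra B(bu,ja)$ given by composition, $\bigl((u',a',\gamma),f,g\bigr)\mapsto jf\circ\gamma\circ bg$, and invert it by sending $\delta : bu\lra ja$ to the class of $\bigl((u,a,\delta),1_a,1_u\bigr)$. Your absorption argument merely spells out what the paper dismisses as ``a familiar argument with tensor products,'' and your explicit Yoneda computation of $b^*\circ j_*$ is left implicit there.
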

\begin{proof}
Consider the comma square
\begin{equation}\label{comma}
\begin{aligned}
\xymatrix{
B/J \ar[d]_{D_1}^(0.5){\phantom{aaa}}="1" \ar[rr]^{D_0}  && \CU \ar[d]^{B}_(0.5){\phantom{aaa}}="2" \ar@{<=}"1";"2"^-{\lambda}
\\
\CA \ar[rr]_-{J} && \CB \ . 
}
\end{aligned}
\end{equation}
The mate of $\lambda$ is the 2-morphism $D_{0 *}\circ D_1^* \lra B^*\circ J_*$
with component 
$$\int^{(V,\beta : BV \ra JX,X)}{\CA(X,A)\times \CU(U,V)} \lra \CB(BU,JA)$$
at $(U,A)$ the function whose composite with the injection $\mathrm{in}_{(V,\beta ,X)}$ at the object 
$(V,\beta : BV \ra JX,X)$ of $B/J$ is the function
$$\CA(X,A)\times \CU(U,V) \lra \CB(BU,JA)$$
taking $(\alpha : X\ra A, \upsilon : U\ra V)$ to the composite
$$BU\stackrel{B\upsilon}\lra BV \stackrel{\beta}\lra JX \stackrel{J\alpha}\lra JA \ .$$ 
By a familiar argument with tensor products, the function
$$\CB(BU,JA) \lra \int^{(V,\beta : BV \ra JX,X)}{\CA(X,A)\times \CU(U,V) \ ,}$$
taking $\beta : BU \ra JA$ to $\mathrm{in}_{(U,\beta ,A)}(1_U,1_A)$,
gives an inverse to the mate of $\lambda$.  
\end{proof}

\section{Sketches}

Sketches on a category were introduced by Charles Ehresmann \cite{Esquisses},
extending Grothendieck topologies \cite{Art1962} and Lawvere theories \cite{Law1963}.
We generalize to sketches on an object in our bicategorical setting, 
improving on \cite{Cort}.

\begin{Definition} A {\em sketch} $\mathbb{T}$ on an object $C$ is a triangle

\begin{equation}\label{sketch}
\begin{aligned}
\xymatrix{
A \ar[rd]_{v}^(0.5){\phantom{a}}="1" \ar[rr]^{u}  && B \ar[ld]^{w}_(0.5){\phantom{a}}="2" \ar@{<=}"1";"2"^-{\tau}
\\
& C 
}
\end{aligned}
\end{equation}
in $\CK$ where $u$ is a fully faithful coopfibration. 

A {\em model} of $\mathbb{T}$ in $X$ is a 1-morphism $f : C \lra X$ in $\CK$ such that $f\tau$
exhibits $fw$ as a pointwise right extension of $fv$ along $u$. 
In particular, $f\tau$ is invertible. 
Write $\mathrm{Mdl}(\mathbb{T},X)$ for the full subcategory of $\CK(C,X)$ consisting of
these models.
\end{Definition}

\begin{proposition}
If $X$ is $u$-complete then $\mathrm{Mdl}(\mathbb{T},X)$ is the inverter of the natural transformation
$$\overline{\CK(\tau,1)}:\CK(w,1)  \Lra \mathrm{ran}_u \CK(v,1) : \CK(C,X)\lra \CK(B,X)$$
which is the mate of $\CK(u,1)\CK(w,1)\cong \CK(wu,1)  \stackrel{\CK(\tau,1)} \Lra \CK(v,1)$. 
\end{proposition}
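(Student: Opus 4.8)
The plan is to unwind the mate and then reduce everything to the uniqueness of right extensions. First I would compute the component of $\overline{\CK(\tau,1)}$ at an object $f\in\CK(C,X)$. Because $X$ is $u$-complete and $fv:A\lra X$ is a map of $\CK$, the pointwise right extension $\mathrm{ran}_u(fv)$ exists, and the counit $\varepsilon_{fv}:(\mathrm{ran}_u(fv))u\Lra fv$ of the adjunction $\CK(u,1)\dashv \mathrm{ran}_u$ exhibits it as such. By the mate formula, the component $\overline{\CK(\tau,1)}_f:fw\Lra \mathrm{ran}_u(fv)$ is the adjunct of the composite $(fw)u\cong f(wu)\stackrel{f\tau}\Lra fv$; equivalently, it is the unique 2-morphism whose pasting with $\varepsilon_{fv}$ recovers $f\tau$. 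In other words, $\overline{\CK(\tau,1)}_f$ is exactly the comparison 2-morphism from the triangle $(fw,f\tau)$ to the universal triangle $(\mathrm{ran}_u(fv),\varepsilon_{fv})$ furnished by the right-extension property.

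Next I would line this up against the definition of a model. By definition, $f$ lies in $\mathrm{Mdl}(\mathbb{T},X)$ precisely when $f\tau$ exhibits $fw$ as a pointwise right extension of $fv$ along $u$. Since right extensions are unique up to a unique isomorphism, and $\mathrm{ran}_u(fv)$ together with $\varepsilon_{fv}$ already is such a (pointwise) right extension, the triangle $(fw,f\tau)$ is a right extension if and only if the induced comparison $\overline{\CK(\tau,1)}_f$ is invertible. Thus membership in $\mathrm{Mdl}(\mathbb{T},X)$ is equivalent to invertibility of $\overline{\CK(\tau,1)}_f$, which is exactly the object-condition defining the inverter. As $\mathrm{Mdl}(\mathbb{T},X)$ and the inverter are both the full subcategory of $\CK(C,X)$ cut out by this condition, they coincide.

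The step requiring care is the bookkeeping of the word \emph{pointwise}. I would note that $\CK$ is locally full in $\CM$, so a 2-morphism between the 1-morphisms $fw$ and $\mathrm{ran}_u(fv)$ is invertible in $\CK$ if and only if it is invertible in $\CM$; hence the comparison may be tested in either bicategory. Because $\mathrm{ran}_u(fv)$ is a \emph{pointwise} right extension, i.e. a right extension in $\CM$ by $u$-completeness, invertibility of $\overline{\CK(\tau,1)}_f$ transports the right-extension-in-$\CM$ property to $(fw,f\tau)$, so that $f\tau$ exhibits $fw$ as a pointwise right extension; conversely, a pointwise right extension is in particular a right extension in $\CM$, whence the comparison to $\mathrm{ran}_u(fv)$ is forced to be invertible. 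This is the sole place the pointwise refinement intervenes, and it is precisely what keeps the argument from merely reproving the analogous statement for plain right extensions in $\CK$.
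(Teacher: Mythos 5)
Your proof is correct. The paper states this proposition without any proof, treating it as immediate from the definitions; your argument --- identifying the component $\overline{\CK(\tau,1)}_f$ as the canonical comparison from the triangle $(fw,f\tau)$ into the universal pointwise right extension $(\mathrm{ran}_u(fv),\varepsilon_{fv})$, deducing that the comparison is invertible precisely when $(fw,f\tau)$ is itself a right extension in $\CM$, and using local fullness of $\CK$ in $\CM$ to keep the \emph{pointwise} qualifier honest --- is exactly the routine verification the paper suppresses.
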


Now suppose $\CM$ is a monoidal bicategory and that $\CK$ is closed under the monoidal structure:
that is, if $f:A\lra B$ and $g:C\lra D$ are 1-morphisms of $\CK$ then 
$f\otimes g:A\otimes C\lra B\otimes D$ is in $\CK$. 
Moreover, we suppose $\CK$ is closed with internal hom $[A,B]$. 

We say that $X$ is {\em pointwise $u$-complete} when it is $(1_K\otimes u)$-complete for every object $K$.

Notice that, for 1-morphisms $u$ and $h$ in $\CK$, we have a BC square:
\begin{equation}
\xymatrix{
H\otimes A \ar[d]_{h\otimes 1}^(0.5){\phantom{aaaaa}}="1" \ar[rr]^{1\otimes u}  && H\otimes B \ar[d]^{h\otimes 1}_(0.5){\phantom{aaaaa}}="2" \ar@{=>}"1";"2"^-{\cong}
\\
K\otimes A \ar[rr]_-{1\otimes u} && K\otimes B \ . 
}
\end{equation}
It follows that, if $X$ is pointwise $u$-complete, then 
$$\mathrm{ran}_u : \CK(K\otimes A, X)\lra \CK(K\otimes B, X)$$
is pseudonatural in $K$ and so transports to a pseudonatural transformation
 $$\CK(K,[A, X])\lra \CK(K,[B, X]) \ .$$
 By the bicategorical Yoneda lemma \cite{FiB}, this last transformation is induced
 by a 1-morphism
 $$\mathrm{r}_u : [A,X] \lra [B,X]$$
 which is right adjoint to
 $$[u,1] : [B,X] \lra [A,X] $$
 in $\CK$. 
 
 \begin{proposition}
If $X$ is pointwise $u$-complete then the inverter $\mathrm{mdl}(\mathbb{T},X)$ of the 2-morphism
$$\overline{[\tau,1]}:[w,1]  \Lra \mathrm{r}_u [v,1] : [C,X]\lra [B,X] \ ,$$
which is the mate of $[u,1][w,1]\cong [wu,1]  \stackrel{[\tau,1]} \Lra [v,1] $,
satisfies the pseudonatural equivalence
$$\mathrm{Mdl}(\mathbb{T},[K,X]) \simeq \CK(K,\mathrm{mdl}(\mathbb{T},X)) \ .$$ 
\end{proposition}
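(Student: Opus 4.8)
The plan is to realise both sides as inverters in $\mathbf{Cat}$ and to identify the two transformations being inverted. First I would check that the object $[K,X]$ is itself $u$-complete, so that the preceding proposition applies to it: under the closed-structure equivalence $\CK(A,[K,X]) \simeq \CK(A\otimes K,X)$ a right extension along $u$ corresponds to one along $u\otimes 1_K$, and such extensions exist because $X$ is pointwise $u$-complete, once the $(1_K\otimes u)$-completeness is transported across the monoidal coherence (this is essentially the content of the displayed BC square preceding the statement). The preceding proposition then exhibits $\mathrm{Mdl}(\mathbb{T},[K,X])$ as the inverter in $\mathbf{Cat}$ of the mate $\overline{\CK(\tau,1)} : \CK(w,1) \Rightarrow \mathrm{ran}_u\,\CK(v,1)$, all computed for the object $[K,X]$.

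For the other side, $\mathrm{mdl}(\mathbb{T},X)$ is by definition the inverter in $\CK$ of $\overline{[\tau,1]}$. By the universal property of an inverter --- equivalently, because the representable $\CK(K,-)$ preserves those limits that exist --- the category $\CK(K,\mathrm{mdl}(\mathbb{T},X))$ is the inverter in $\mathbf{Cat}$ of the natural transformation $\CK(K,\overline{[\tau,1]}) : \CK(K,[w,1]) \Rightarrow \CK(K,\mathrm{r}_u[v,1])$.

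The crux is to match these two inverted transformations under the closed-structure equivalences $\CK(K,[C,X]) \simeq \CK(C,[K,X])$ and $\CK(K,[B,X]) \simeq \CK(B,[K,X])$. Two facts are needed. First, internal precomposition transports to external precomposition, so $\CK(K,[w,1])$ and $\CK(K,[v,1])$ become $\CK(w,1)$ and $\CK(v,1)$ for $[K,X]$. Second, and this is where I expect the real work, the Yoneda construction defining $\mathrm{r}_u$ shows that $\CK(K,\mathrm{r}_u)$ implements $\mathrm{ran}_{1_K\otimes u}$, which after transport across the monoidal coherence is exactly the operation $\mathrm{ran}_u$ for the object $[K,X]$. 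Since $\CK(K,-)$ is a pseudofunctor it carries the adjunction $[u,1]\dashv\mathrm{r}_u$, and hence mates, to $\CK(K,[u,1])\dashv\CK(K,\mathrm{r}_u)$; combined with the previous identifications this shows that $\CK(K,\overline{[\tau,1]})$, the mate of $\CK(K,[\tau,1])$, transports to $\overline{\CK(\tau,1)}$, the mate of $\CK(\tau,1)$ relative to $\CK(u,1)\dashv\mathrm{ran}_u$ on $[K,X]$. As inverters of naturally isomorphic transformations coincide, the desired equivalence holds for each fixed $K$.

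It then remains to promote this to a pseudonatural equivalence in $K$. The right-hand side is pseudonatural because it is representable; the left-hand side is pseudonatural through the contravariant pseudofunctor $K\mapsto[K,X]$ followed by $\mathrm{Mdl}(\mathbb{T},-)$; and each equivalence invoked above --- the closed structure, the bicategorical Yoneda lemma, and the preservation of inverters --- is already pseudonatural in $K$, so their composite is too. The main obstacle throughout is the bookkeeping of the third paragraph: keeping the two adjunctions $[u,1]\dashv\mathrm{r}_u$ and $\CK(u,1)\dashv\mathrm{ran}_u$ aligned and controlling the $K\otimes(-)$ versus $(-)\otimes K$ variance through the monoidal coherence, so that the two mates are genuinely identified and not merely analogous.
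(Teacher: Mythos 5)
The paper states this proposition with no proof at all, so there is no official argument to measure yours against; your outline (realise both sides as inverters in $\mathbf{Cat}$, identify the two 2-cells being inverted, use the preceding proposition, the Yoneda construction of $\mathrm{r}_u$, transport of mates across adjunctions, then pseudonaturality in $K$) is certainly the route the author intends, and several of its steps are sound: $\CK(K,\mathrm{mdl}(\mathbb{T},X))$ is the inverter of $\CK(K,\overline{[\tau,1]})$ by the defining universal property, and the pseudofunctor $\CK(K,-)$ does carry $[u,1]\dashv\mathrm{r}_u$ to $\CK(K,[u,1])\dashv\CK(K,\mathrm{r}_u)$ and hence matches mates. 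The genuine gap lies in the two claims you lean on hardest: that $[K,X]$ is $u$-complete, and that $\CK(K,\mathrm{r}_u)$, which by construction implements $\mathrm{ran}_{1_K\otimes u}$ into $X$, ``is exactly the operation $\mathrm{ran}_u$ for the object $[K,X]$.'' Both ``$u$-complete'' and ``$\mathrm{ran}_u$'' refer to \emph{pointwise} extensions, i.e.\ right extensions in $\CM$, whereas the closed structure of $\CK$ only supplies equivalences of $\CK$-hom-categories $\CK(K\otimes A,X)\simeq\CK(K,[A,X])$. Such an equivalence transports right extensions \emph{in $\CK$}, but the $\CM$-level universal property quantifies over arbitrary 1-morphisms $B\lra [K,X]$ of $\CM$, and there is no way to uncurry those: $\CM$ is not assumed closed, only $\CK$ is. So the transfer of pointwise extensions across currying is not formal bookkeeping; it is essentially the entire mathematical content of the proposition. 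In the motivating example $\CK=\CV\text{-}\mathrm{Cat}$, $\CM=\CV\text{-}\mathrm{Mod}$ it is the theorem (Kelly) that pointwise Kan extensions into a functor $\CV$-category exist and are computed objectwise, which is proved from the end formulas, not from abstract closedness. Your proof asserts this transfer rather than proving it, and the statement reduces to it, so nothing has been established.

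Moreover, the justification you offer for that step is misdirected on both counts. The displayed BC square is not ``essentially the content'' of the transfer: its role is only to make $\mathrm{ran}_{1_K\otimes u}:\CK(K\otimes A,X)\lra\CK(K\otimes B,X)$ pseudonatural in $K$ (via the earlier proposition on pasting a pointwise extension onto a BC square), which is what lets the bicategorical Yoneda lemma produce $\mathrm{r}_u$ in the first place; it says nothing about extensions into $[K,X]$. And ``monoidal coherence'' cannot convert $u\otimes 1_K$ into $1_K\otimes u$: coherence governs associators and unitors only, so the comparison you need between $\CK(C\otimes K,X)$ (classifying $\CK(C,[K,X])$) and $\CK(K\otimes C,X)$ (classifying $\CK(K,[C,X])$) requires a symmetry or braiding, or a second, left-handed internal hom used consistently --- none of which the paper assumes. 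You correctly flag this variance as the main obstacle, but then delegate it to coherence, where it cannot be discharged. A repaired proof needs (i) an explicit lemma or hypothesis that pointwise ($\CM$-level) right extensions into $[K,X]$ along $u$ correspond to pointwise right extensions into $X$ along $1_K\otimes u$, and (ii) a symmetry (or a stated two-sided closed structure) to handle the tensor order; with those in hand, your inverter-matching argument goes through.
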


\begin{center}
--------------------------------------------------------
\end{center}

\appendix

\end{document}